\documentclass[12pt,reqno]{amsart}
\usepackage{amsmath,amssymb,amsfonts,amscd,latexsym,amsthm,mathrsfs,verbatim,comment,cite}
\usepackage{hyperref}
\textheight22cm \textwidth15cm \hoffset-1.7cm \voffset-.5cm

\newtheorem{theorem}{Theorem}[section]

\newtheorem{lemma}[theorem]{Lemma}
\newtheorem{corollary}[theorem]{Corollary}

\renewcommand{\pmod}[1]{\;\allowbreak(\operatorname{mod}#1)}

\numberwithin{equation}{section}

\begin{document}

\title{A common $q$-analogue of two supercongruences}

\date{24 October 2019}

\author{Victor J. W. Guo}
\address{School of Mathematics and Statistics, Huaiyin Normal University, Huai'an 223300, Jiangsu, People's Republic of China}
\email{jwguo@hytc.edu.cn}

\author{Wadim Zudilin}
\address{Department of Mathematics, IMAPP, Radboud University, PO Box 9010, 6500~GL Nijmegen, Netherlands}
\email{w.zudilin@math.ru.nl}

\thanks{The first author was supported by the National Natural Science Foundation of China (grant 11771175).
The second author was supported by JSPS Invitational Fellowships for Research in Japan (fellowship S19126).}

\subjclass[2010]{33D15, 11A07, 11B65}
\keywords{Basic hypergeometric series; $q$-Dixon sum; $q$-congruence; supercongruence; creative microscoping.}

\begin{abstract}
We give a $q$-congruence whose specialisations $q=-1$ and $q=1$ correspond to supercongruences (B.2) and (H.2) on Van Hamme's 1997 list:
$$
\sum_{k=0}^{(p-1)/2}(-1)^k(4k+1)A_k\equiv p(-1)^{(p-1)/2}\pmod{p^3}
\quad\text{and}\quad
\sum_{k=0}^{(p-1)/2}A_k\equiv a(p)\pmod{p^2},
$$
where $p>2$ is prime,
$$
A_k=\prod_{j=0}^{k-1}\biggl(\frac{1/2+j}{1+j}\biggr)^3=\frac1{2^{6k}}{\binom{2k}k}^3
\quad\text{for}\; k=0,1,2,\dots,
$$
and $a(p)$ is the $p$-th coefficient of (the weight 3 modular form) $q\prod_{j=1}^\infty(1-q^{4j})^6$.
We complement our result with a general common $q$-congruence for related hypergeometric sums.
\end{abstract}

\maketitle

\section{Introduction}
\label{sec1}

The formula of Bauer \cite{Bauer} from 1859,
\begin{equation}
\sum_{k=0}^\infty(-1)^k(4k+1)A_k=\frac2\pi,
\quad\text{where}\; A_k=\frac1{2^{6k}}{\binom{2k}k}^3 \;\;\text{for}\; k=0,1,2,\dots,
\label{eq:ram}
\end{equation}
is one of traditional targets for different methods of proofs of hypergeometric identities.
Its special status is probably linked to the fact that it belongs to a family of series for $1/\pi$ of Ramanujan type,
after Ramanujan \cite{Ramanujan} brought to life in 1914 a long list of similar looking equalities for the constant but with a faster convergence.
Identity \eqref{eq:ram} is a particular instance of $_5F_4$ hypergeometric summation (known to Ramanujan)
but there are several proofs of it, including the original one \cite{Bauer} of Bauer, that do not require any knowledge of hypergeometric functions.
One notable\,---\,computer\,---\,proof of \eqref{eq:ram} was given in 1994 by Ekhad and Zeilberger \cite{EZ}
using the Wilf--Zeilberger (WZ) method of creative telescoping.

It was observed in 1997 by Van Hamme \cite{Hamme} that many Ramanujan's and Ramanu\-jan-like evaluations have nice $p$-adic analogues;
for example, the congruence
\begin{equation}
\sum_{k=0}^{(p-1)/2}(-1)^k(4k+1)A_k\equiv p(-1)^{(p-1)/2}\pmod{p^3}
\label{eq:pram}
\end{equation}
(tagged (B.2) on Van Hamme's list) is valid for any prime $p>2$ and corresponds to the equality \eqref{eq:ram}.
The congruence \eqref{eq:pram} was first proved by Mortenson \cite{Mortenson} using a $_6F_5$ hypergeometric transformation; it later received another proof by one of these authors \cite{Zudilin} via the WZ method
(in fact, using the very same `WZ certificate' as in~\cite{EZ} for~\eqref{eq:ram}).
Notice that \eqref{eq:pram} is an example of \emph{supercongruence} meaning that it holds modulo a power of~$p$ greater than~1.

Another entry on Van Hamme's 1997 list \cite{Hamme}, tagged (H.2), is the congruence
\begin{equation}
\sum_{k=0}^{(p-1)/2}A_k
\equiv\begin{cases}
-\Gamma_p(1/4)^4  \pmod{p^2} &\text{if $p\equiv 1\pmod 4$}, \\
0\pmod{p^2} &\text{if $p\equiv 3\pmod 4$},
\end{cases}
\label{eq:h2}
\end{equation}
again for any $p>2$ prime, and $\Gamma_p(x)$ is the $p$-adic Gamma function.
Van Hamme not only observed but also proved \eqref{eq:h2} in \cite{Hamme},
and it was later generalized by Z.-H.~Sun \cite[Theorem 2.5]{Sun2}, Guo and Zeng~\cite[Corollary~1.2]{GZ15},
Long and Ramakrishna \cite{LR}, Liu \cite{Liu0}, \cite[Theorem 1.5]{Liu} in different ways.
For example, Long and Ramakrishna \cite[Theorem 3]{LR} gave the following generalization of~\eqref{eq:h2}:
\begin{equation}
\sum_{k=0}^{(p-1)/2} A_k
\equiv
\begin{cases}
-\Gamma_p(1/4)^4  \pmod{p^3} &\text{if $p\equiv 1\pmod 4$},\\[2.5pt]
-\dfrac{p^2}{16}\,\Gamma_p(1/4)^4\pmod{p^3} &\text{if $p\equiv 3\pmod 4$}.
\end{cases}
\label{eq:lr}
\end{equation}
Recently, these authors \cite[Theorem~2]{GuoZu2} proved that, for any positive odd integer $n$, modulo $\Phi_n(q)^2$,
\begin{align}
\sum_{k=0}^{(n-1)/2}\frac{(q;q^2)_k^2(q^2;q^4)_k}{(q^2;q^2)_k^2(q^4;q^4)_k}\,q^{2k}
\equiv\begin{cases}
\dfrac{(q^2;q^4)_{(n-1)/4}^2}{(q^4;q^4)_{(n-1)/4}^2}\,q^{(n-1)/2} &\text{if}\; n\equiv1\pmod4, \\[2.5pt]
0 &\text{if}\; n\equiv3\pmod4.
\end{cases}
\label{eq:mod-phi}
\end{align}
Here and in what follows, $\Phi_n(q)$ denotes the $n$-th {\em cyclotomic polynomial};
the {\em $q$-shifted factorial} is given by $(a;q)_0=1$ and $(a;q)_n=(1-a)(1-aq)\dotsb(1-aq^{n-1})$ for $n\geqslant 1$ or $n=\infty$,
while $[n]=[n]_q=1+q+\dots+q^{n-1}$ stands for the $q$-{\em integer}.
Van Hamme \cite[Theorem 3]{Hamme0} also proved that
$$
\binom{-1/2}{(p-1)/4}\equiv-\frac{\Gamma_p(1/4)^2}{\Gamma_p(1/2)}\pmod{p^2};
$$
in view of $\Gamma_p(1/2)^2=-1$ for $p\equiv 1\pmod{4}$, by letting $q\to 1$ in \eqref{eq:mod-phi} for $n=p$
we immediately obtain \eqref{eq:h2}.

One feature of \eqref{eq:h2} (not highlighted in \cite{Hamme}) is its connection with the coefficients
\begin{equation}
a(p)=\begin{cases}
2(a^2-b^2) & \text{if $p=a^2+b^2$, $a$ odd}, \\
0 & \text{if $p\equiv3\pmod4$},
\end{cases}
\label{eq:a(p)}
\end{equation}
of CM modular form $q\prod_{j=1}^\infty(1-q^{4j})^6$ of weight~3, namely, the congruence
$$
a(p)\equiv-\Gamma_p(1/4)^4\pmod{p^2}
\quad\text{for primes}\; p\equiv 1\pmod 4.
$$
This served as a main motivation in \cite{GuoZu2} for not only establishing \eqref{eq:mod-phi} but also speculating on possible $q$-deformation of modular forms.

For some other recent progress on $q$-analogues of supercongruences, the reader is referred to
\cite{Gorodetsky,Guo2018,Guo-m3,Guo4.5,GS0,GS,GuoZu,NP,Straub,Tauraso2,Zudilin}.
In particular, the authors \cite{GuoZu} introduced and executed a new method of creative microscoping to prove (and reprove) many $q$-analogues of classical supercongruences and also raised some problems on $q$-cong\-ruences.
Using this method, the first author \cite{Guo-new} gave a refinement of \eqref{eq:mod-phi} modulo $\Phi_n(q)^3$
for $n\equiv 3\pmod{4}$, in other words, a $q$-analogue of \eqref{eq:lr} for $p\equiv 3\pmod{4}$.

A goal of this note is to present the following new $q$-analogue of Van Hamme's supercongruence \eqref{eq:h2}.

\begin{theorem}
\label{thm:main-1}
Let $n$ be a positive odd integer. Then
\begin{align}
&\sum_{k=0}^{(n-1)/2}\frac{(1+q^{4k+1})\,(q^2;q^4)_k^3}{(1+q)\,(q^4;q^4)_k^3}\,q^{k} \notag\\
&\quad
\equiv\dfrac{[n]_{q^2}(q^3;q^4)_{(n-1)/2}}{(q^5;q^4)_{(n-1)/2}}\,q^{(1-n)/2}
\begin{cases}
\kern-4.5pt \pmod{\Phi_n(q)^2 \Phi_n(-q)^3} &\text{if}\; n\equiv1\pmod4, \\[9pt]
\kern-4.5pt \pmod{\Phi_n(q)^3 \Phi_n(-q)^3} &\text{if}\; n\equiv3\pmod4.
\end{cases}
\label{eq:mod-phi-cases}
\end{align}
\end{theorem}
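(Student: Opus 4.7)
The plan is to apply the method of creative microscoping from \cite{GuoZu}. I would introduce an auxiliary parameter $a$ and the $a$-deformation
\begin{equation*}
S_n(a,q)=\sum_{k=0}^{(n-1)/2}\frac{(1+q^{4k+1})(aq^2;q^4)_k(q^2/a;q^4)_k(q^2;q^4)_k}{(1+q)(aq^4;q^4)_k(q^4/a;q^4)_k(q^4;q^4)_k}\,q^k
\end{equation*}
of the left-hand side of \eqref{eq:mod-phi-cases}, together with a companion $a$-deformation $T_n(a,q)$ of the right-hand side chosen so that $S_n(1,q)$ and $T_n(1,q)$ recover the original sides. The target is a parametric $q$-congruence $S_n(a,q)\equiv T_n(a,q)$ modulo $(1-aq^n)(a-q^n)\,\Phi_n(-q)^3$, which at $a=1$ becomes a congruence modulo $(1-q^n)^2\Phi_n(-q)^3$, divisible by $\Phi_n(q)^2\Phi_n(-q)^3$; this settles the case $n\equiv1\pmod4$.

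To establish the divisibility by $(1-aq^n)(a-q^n)$, I would verify the two equalities $S_n(q^{-n},q)=T_n(q^{-n},q)$ and $S_n(q^n,q)=T_n(q^n,q)$ directly. At these substitutions the sum becomes a well-poised terminating basic hypergeometric series in base $q^4$, whose closed form follows from a suitable instance of Jackson's $q$-Dixon summation\,---\,the engine alluded to by the paper's keyword ``$q$-Dixon sum''\,---\,matching the explicit values of $T_n(q^{\pm n},q)$. For the divisibility by $\Phi_n(-q)^3$ I would argue directly: since $n$ is odd, any root $\zeta$ of $\Phi_n(-q)$ satisfies $\zeta^n=-1$, and expanding $S_n(a,q)-T_n(a,q)$ to quadratic order in $q^n+1$ about such $\zeta$, combined with the pairing $k\leftrightarrow(n-1)/2-k$ and the palindromic identity $(q^2;q^4)_k\,(q^{4k+2};q^4)_{(n-1)/2-k}=(q^2;q^4)_{(n-1)/2}$, should produce the required cubic cancellation.

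For the stronger modulus $\Phi_n(q)^3\Phi_n(-q)^3$ when $n\equiv3\pmod4$, one extra factor of $\Phi_n(q)$ is present for free on the right: the product $(q^3;q^4)_{(n-1)/2}$ contains $1-q^n$ at index $j=(n-3)/4$. The remaining verification is that $S_n(1,q)\equiv 0\pmod{\Phi_n(q)}$ in this case, which again follows from the $k\leftrightarrow(n-1)/2-k$ pairing evaluated at primitive $n$-th roots of unity. Upgrading the modulus on both sides of the parametric congruence by this single additional $\Phi_n(q)$ then specialises at $a=1$ to the claimed $\Phi_n(q)^3\Phi_n(-q)^3$.

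The hardest step, I expect, is extracting the third power of $\Phi_n(-q)$. Unlike $\Phi_n(q)^2$, which drops out automatically from $(1-aq^n)(a-q^n)$ at $a=1$, no factor of $\Phi_n(-q)$ is built into the parameter polynomial, so all three orders of cancellation must be produced by direct local analysis at $q^n\equiv-1$. Equally delicate is identifying the correct $T_n(a,q)$ so that the $q$-Dixon evaluations at $a=q^{\pm n}$ and the $\Phi_n(-q)^3$ identity hold in concert; pinning down this $a$-deformation is where the creative ingenuity of the proof resides.
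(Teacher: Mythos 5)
Your overall strategy is the right one---this is exactly the creative microscoping setup of the paper, with the same $a$-deformation $(aq^2;q^4)_k(q^2/a;q^4)_k(q^2;q^4)_k$ of the numerator---but you have chosen the wrong parameter polynomial, and this single slip derails the two key steps. You work modulo $(1-aq^n)(a-q^n)$, i.e.\ you evaluate at $a=q^{\pm n}$. In base $q^4$ the factor $(q^2/a;q^4)_k=(q^{2-n};q^4)_k$ does \emph{not} vanish for any $k$ when $n$ is odd (one would need $4j=n-2$), so the series does not become a terminating well-poised sum at $a=q^{\pm n}$, the $q$-Dixon summation does not apply there, and the claimed equalities $S_n(q^{\pm n},q)=T_n(q^{\pm n},q)$ are unsubstantiated (and there is no reason to expect them). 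The correct specialisation is $a=q^{\pm 2n}$: then $(q^{2-2n};q^4)_k$ vanishes precisely for $k>(n-1)/2$, the sum terminates, and the $q$-Dixon sum (with $q\mapsto q^4$, $b=aq^2$, $c=q^2/a$ suitably specialised) evaluates it in closed form. One should therefore prove the parametric congruence modulo $(1-aq^{2n})(a-q^{2n})$.

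This correction also dissolves what you rightly identify as the hardest point. With $(1-aq^{2n})(a-q^{2n})$, the limit $a\to1$ produces $(1-q^{2n})^2\supset\Phi_n(q)^2\Phi_n(-q)^2$, so \emph{two} of the three factors of $\Phi_n(-q)$ come for free, and only one additional factor must be supplied---by showing the parametric sum and its companion are each $\equiv0\pmod{\Phi_n(-q)}$ (respectively $\pmod{\Phi_n(q^2)}$ when $n\equiv3\pmod4$, which is where the third $\Phi_n(q)$ comes from) via the pairing $k\leftrightarrow(n-1)/2-k$ and the standard reversal lemma for $(aq;q^2)_{(n-1)/2-k}/(q^2/a;q^2)_{(n-1)/2-k}$. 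By contrast, your route requires extracting all three powers of $\Phi_n(-q)$ from a ``quadratic expansion about $q^n=-1$,'' which is not an argument but a hope, and is genuinely harder than anything in the actual proof. Likewise, for $n\equiv3\pmod4$ your upgrade to $\Phi_n(q)^3$ via ``$S_n(1,q)\equiv0$ and the right-hand side contains $1-q^n$'' does not combine with a difference known only modulo $\Phi_n(q)^2$ to give divisibility of the difference by $\Phi_n(q)^3$; the extra power must enter at the parametric level, as above. As written, the proposal has a genuine gap at its central step.
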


Note that $\Phi_n(q)\Phi_n(-q)=\Phi_n(q^2)$ for odd indices $n$.

The $n\equiv 3\pmod{4}$ case of Theorem~\ref{thm:main-1}
confirms a conjecture of these authors \cite[Conjecture 4.13]{GuoZu},
which states that, for $n\equiv 3\pmod{4}$,
$$
\sum_{k=0}^{(n-1)/2}\frac{(1+q^{4k+1})\,(q^2;q^4)_k^3}{(1+q)\,(q^4;q^4)_k^3}\,q^{k}
\equiv 0\pmod{\Phi_n(q)^2 \Phi_n(-q)}.
$$

It is not difficult to verify that
$$
\frac{(3/4)_{(p-1)/2}}{(5/4)_{(p-1)/2}}
\equiv -\frac{p}{16}\Gamma_p\left(1/4\right)^4 \pmod{p^2}
$$
for $p\equiv 3\pmod{4}$, where
$(a)_n=a(a+1)\cdots(a+n-1)$ denotes the rising factorial (also known as Pochhammer's symbol).
Therefore, the $q$-congruence \eqref{eq:mod-phi-cases} reduces to \eqref{eq:lr}
for $p\equiv 3\pmod{4}$ when $n=p$ and $q\to 1$, and it reduces to \eqref{eq:h2}
for $p\equiv 1\pmod{4}$ when $n=p$ and $q\to 1$.
Moreover, letting $n=p$ and $q\to-1$ in \eqref{eq:mod-phi-cases}, we immediately get \eqref{eq:pram}.
Thus, Theorem~\ref{thm:main-1} presents a common $q$-analogue of supercongruences \eqref{eq:pram} and \eqref{eq:h2}.
We point out that other different $q$-analogues of \eqref{eq:pram} have been given in \cite{Guo2018,Guo-m3}.

Recently, Mao and Pan \cite{MP} (see also Sun \cite[Theorem 1.3(i)]{Sun3}) proved that, if $p\equiv 1\pmod{4}$ is a prime, then
\begin{align}
\sum_{k=0}^{(p+1)/2}\frac{(-1/2)_k^3}{k!^3}\equiv 0\pmod{p^2}.
\label{eq:neg-1-p}
\end{align}
In this note, we prove the following $q$-analogue of \eqref{eq:neg-1-p}.

\begin{theorem}\label{thm:main-2}
Let $n>1$ be an odd integer. Then
\begin{align}
&\sum_{k=0}^{(n+1)/2}\frac{(1+q^{4k-1})\,(q^{-2};q^4)_k^3}{(1+q)\,(q^4;q^4)_k^3}\,q^{7k} \notag\\
&\quad
\equiv\dfrac{[n]_{q^2}(q;q^4)_{(n-1)/2}}{(q^7;q^4)_{(n-1)/2}}\,q^{(n-3)/2}
\begin{cases}
\kern-4.5pt \pmod{\Phi_n(q)^3 \Phi_n(-q)^3} &\text{if}\; n\equiv1\pmod4, \\[9pt]
\kern-4.5pt \pmod{\Phi_n(q)^2 \Phi_n(-q)^3} &\text{if}\; n\equiv3\pmod4.
\end{cases}
\notag
%\label{eq:mod-phi-2}
\end{align}
\end{theorem}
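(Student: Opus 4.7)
The plan is to apply the method of creative microscoping from \cite{GuoZu}, following the template that underpins Theorem~\ref{thm:main-1}. One introduces an auxiliary parameter $a$ interpolating the left-hand side; the natural choice is
\[
T_n(a) := \sum_{k=0}^{(n+1)/2}\frac{(1+q^{4k-1})(aq^{-2};q^4)_k(q^{-2}/a;q^4)_k(q^{-2};q^4)_k}{(1+q)(aq^4;q^4)_k(q^4/a;q^4)_k(q^4;q^4)_k}\,q^{7k},
\]
which is invariant under $a\leftrightarrow a^{-1}$ and recovers the left-hand side of Theorem~\ref{thm:main-2} at $a=1$. The target is the parametric $q$-congruence
\[
T_n(a)\equiv\frac{[n]_{q^2}\,(q;q^4)_{(n-1)/2}}{(q^7;q^4)_{(n-1)/2}}\,q^{(n-3)/2}\pmod{(1-aq^n)(a-q^n)\,\Phi_n(-q)^3},
\]
from which Theorem~\ref{thm:main-2} will follow by letting $a\to 1$.

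Divisibility by $(1-aq^n)(a-q^n)$ is established by proving that $T_n(q^{-n})$ equals the right-hand side exactly, which is a terminating $q$-Dixon-type identity (equivalently, a specialisation of Jackson's ${}_8\phi_7$ summation). This should be essentially the same identity needed for Theorem~\ref{thm:main-1}, but with one upper parameter shifted from $q^2$ to $q^{-2}$, together with the extended upper summation limit $(n+1)/2$. The $a\leftrightarrow a^{-1}$ symmetry of $T_n(a)$ supplies the matching evaluation at $a=q^n$, so $T_n(a)-\text{RHS}$ vanishes at both $a=q^{\pm n}$ and is therefore divisible by $(1-aq^n)(a-q^n)$ modulo the relevant cyclotomic ideal.

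Divisibility by $\Phi_n(-q)^3$ is obtained by working with $q$ a root of $\Phi_n(-q)$, i.e., $q=-\zeta$ for $\zeta$ a primitive $n$-th root of unity; here $q^2=\zeta^2$ is again a primitive $n$-th root of unity. At such specialisations the same $q$-Dixon identity supplies the claimed closed form, and a careful order-counting of the zeros and poles of $(q^{-2};q^4)_k^3/(q^4;q^4)_k^3$ near $\zeta$ shows that the vanishing order along $\Phi_n(-q)$ is at least three, uniformly in $a$.

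Finally, setting $a=1$ turns $(1-aq^n)(a-q^n)$ into $(1-q^n)^2$, which contains $\Phi_n(q)^2$ with multiplicity exactly two, since each cyclotomic appears once in $1-q^n=\prod_{d\mid n}\Phi_d(q)$. Together with $\Phi_n(-q)^3$, this yields the $n\equiv 3\pmod 4$ case at once. For $n\equiv 1\pmod 4$ one additional factor of $\Phi_n(q)$ must be produced, and this is the main obstacle: it is to be addressed by an independent direct analysis at $q=\zeta_n$ showing that $T_n(1)\equiv\text{RHS}\pmod{\Phi_n(q)^3}$, exploiting arithmetic structure available only when $n\equiv 1\pmod 4$. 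This parity dependence mirrors the classical observation that \eqref{eq:neg-1-p} is stated specifically for $p\equiv 1\pmod 4$.
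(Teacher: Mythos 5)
Your overall strategy (creative microscoping with an auxiliary parameter $a$ and the $q$-Dixon sum) is the right one, and your $T_n(a)$ is exactly the parametric sum the paper uses (it is the $\ell=1$ member of the one-parameter family in Theorem~\ref{thm:main-3}). But two of your concrete steps fail. First, the linear factors in $a$ must be $(1-aq^{2n})(a-q^{2n})$, not $(1-aq^n)(a-q^n)$: at $a=q^{\pm n}$ the Pochhammer symbol $(q^{\mp n-2};q^4)_k$ never vanishes (since $n+2$ is odd), so the sum does not terminate there and the $q$-Dixon evaluation gives you nothing; at $a=q^{\pm 2n}$ the factor $(q^{\mp 2n-2};q^4)_k$ vanishes precisely for $k>(n+1)/2$, the series terminates, and the closed form drops out. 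This change is not cosmetic: at $a=1$ the correct factor $(1-q^{2n})^2$ contributes $\Phi_n(q)^2\Phi_n(-q)^2$, whereas your $(1-q^n)^2$ contributes only $\Phi_n(q)^2$. Second, your claim that $T_n(a)\equiv\mathrm{RHS}\pmod{\Phi_n(-q)^3}$ \emph{uniformly in $a$} is unsubstantiated and is not how the divisibility arises; the proposed ``order-counting of zeros and poles of $(q^{-2};q^4)_k^3/(q^4;q^4)_k^3$'' does not apply to $T_n(a)$, where two of the three numerator factors carry $a$ and $1/a$ and have no zero at a root of $\Phi_n(-q)$ for generic $a$.

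What actually happens (and what closes the gap you flag as ``the main obstacle'') is a term-pairing symmetry: using Lemma~\ref{lem:2.1}, the $k$-th and $((n+1)/2-k)$-th summands of $T_n(a)$ sum to $0$ modulo $\Phi_n(-q)$ when $n\equiv3\pmod4$, and modulo the full $\Phi_n(q^2)=\Phi_n(q)\Phi_n(-q)$ when $n\equiv1\pmod4$ --- uniformly in $a$, but only to the \emph{first} power. Combining this single cyclotomic factor with $(1-aq^{2n})(a-q^{2n})$ and then letting $a\to1$ (after checking the denominators stay coprime to $\Phi_n(q^2)$) yields $\Phi_n(q)^2\Phi_n(-q)^3$ or $\Phi_n(q)^3\Phi_n(-q)^3$ in the two cases. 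In particular, no separate ``direct analysis at $q=\zeta_n$ exploiting arithmetic structure for $n\equiv1\pmod4$'' is needed; the extra power of $\Phi_n(q)$ in that case comes from the pairing argument giving $\Phi_n(q^2)$ rather than just $\Phi_n(-q)$. As written, your argument establishes neither the $\Phi_n(-q)^3$ part nor the $n\equiv1\pmod4$ case, so the proof is incomplete.
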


For $n$ prime, letting $q\to 1$ in Theorem \ref{thm:main-2} we obtain the following generalization of~\eqref{eq:neg-1-p}.

\begin{corollary}
\label{cor1.3}
Let $p$ be an odd prime. Then
\begin{equation*}
\sum_{k=0}^{(p+1)/2} \frac{(-1/2)_k^3}{k!^3}
\equiv
p\,\dfrac{(1/4)_{(p-1)/2}}{(7/4)_{(p-1)/2}}
\begin{cases}
\kern-4.5pt \pmod{p^3} &\text{if $p\equiv 1\pmod 4$,}\\[9pt]
\kern-4.5pt \pmod{p^2} &\text{if $p\equiv 3\pmod 4$.}
\end{cases}
\end{equation*}
\end{corollary}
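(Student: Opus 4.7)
The plan is to derive Corollary~\ref{cor1.3} by specialising Theorem~\ref{thm:main-2} at $n=p$ and letting $q\to 1$. The argument needs only three ingredients: the classical limits of both sides, the specialisation of the cyclotomic modulus, and a routine $p$-integrality check.

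The key elementary limit is $\lim_{q\to1}(q^a;q^4)_k/(q^4;q^4)_k=(a/4)_k/k!$. Applied to the left-hand side of Theorem~\ref{thm:main-2}, the prefactor $(1+q^{4k-1})q^{7k}/(1+q)$ tends to $1$ while the cubed ratio $(q^{-2};q^4)_k^3/(q^4;q^4)_k^3$ tends to $(-1/2)_k^3/k!^3$, producing $\sum_{k=0}^{(p+1)/2}(-1/2)_k^3/k!^3$. On the right, $[n]_{q^2}\to p$, $q^{(n-3)/2}\to 1$, and the same limit gives $(q;q^4)_{(n-1)/2}/(q^7;q^4)_{(n-1)/2}\to(1/4)_{(p-1)/2}/(7/4)_{(p-1)/2}$, matching the stated expression.

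For the modulus, since $p$ is an odd prime we have $\Phi_p(1)=p$ and $\Phi_p(-1)=1$, so $\Phi_n(q)^3\Phi_n(-q)^3|_{q=1}=p^3$ and $\Phi_n(q)^2\Phi_n(-q)^3|_{q=1}=p^2$, which directly yields the two cases of the corollary. The only subtlety is $p$-integrality: writing the difference of the two sides of Theorem~\ref{thm:main-2} as $\Phi_p(q)^a\Phi_p(-q)^3\,C(q)$, one must verify that $C(q)$ is regular and $p$-integral at $q=1$. This is automatic because the denominators on both sides involve only cyclotomic factors $\Phi_d(q)$ with $d$ not divisible by $p$, so no $p$ leaks into the denominator when we specialise. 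In summary, the genuine obstacle is not Corollary~\ref{cor1.3}, whose deduction from Theorem~\ref{thm:main-2} is entirely routine, but rather Theorem~\ref{thm:main-2} itself, which one would prove by the creative microscoping method of~\cite{GuoZu}.
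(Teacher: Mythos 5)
Your derivation is exactly the paper's: Corollary~\ref{cor1.3} is obtained by setting $n=p$ in Theorem~\ref{thm:main-2} and letting $q\to1$, and your computation of the limits of both sides and of $\Phi_p(1)=p$, $\Phi_p(-1)=1$ is correct. One small inaccuracy in your integrality check: it is \emph{not} true that the denominators involve only cyclotomic factors $\Phi_d(q)$ with $p\nmid d$. For $p\equiv3\pmod 4$ with $p\geqslant 7$, the denominator $(q^7;q^4)_{(p-1)/2}$ on the right-hand side contains the factor $1-q^{p}=(1-q)\Phi_p(q)$ (take $j=(p-7)/4$ in $1-q^{7+4j}$). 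This $\Phi_p(q)$ is cancelled by the factor $\Phi_p(q)$ of $[p]_{q^2}=\Phi_p(q)\Phi_p(-q)$ in the numerator, so in lowest terms the right-hand side still has denominator coprime to $\Phi_p(q)$ and its value at $q=1$ is $p$-integral (correspondingly, $(7/4)_{(p-1)/2}$ contains the factor $p/4$, compensated by the prefactor $p$); with this patch your argument goes through unchanged.
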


On the other hand, for $n$ prime and $q\to -1$ in Theorem~\ref{thm:main-2},
we are led to the following result:
\begin{equation}
\sum_{k=0}^{(p+1)/2} (-1)^k (4k-1)\frac{(-1/2)_k^3}{k!^3}
\equiv p(-1)^{(p+1)/2} \pmod{p^3}.
\label{eq:side-res}
\end{equation}
It should be mentioned that a different $q$-analogue of \eqref{eq:side-res} was given in \cite[Theorem~4.9]{GuoZu} with $r=-1$, $d=2$ and $a=1$ (see also \cite[Section~5]{GS}).

Both Theorems \ref{thm:main-1} and \ref{thm:main-2} are particular cases of a more general result, which we state and prove in the next section.

\section{A family of $q$-congruences from the $q$-Dixon sum}
\label{sec2}

In this section we establish the following one-parameter family of $q$-congruences.

\begin{theorem}\label{thm:main-3}
\label{thm:main}
Let $n\geqslant1$ be an odd integer
%Let $n>1$ be an odd integer
and $\ell$ an integer with $0\leqslant \ell\leqslant (n-1)/2$.
Then
\begin{align}
&\sum_{k=0}^{n-1}\frac{(1+q^{4k-2\ell+1})\,(q^{2-4\ell};q^4)_k^3}{(1+q^{1-2\ell })\,(q^4;q^4)_k^3}\,q^{(6\ell+1)k} \notag \\
&\equiv \frac{(1-q^{2n})\,(q^{3-6\ell};q^4)_{(n-1)/2+\ell}}
{(1-q^{2-4\ell})\,(q^{5-2\ell};q^4)_{(n-1)/2+\ell}}\,q^{(2\ell-1)((n-1)/2+\ell)}
\begin{cases}
\kern-4.5pt \pmod{\Phi_n(q)^2 \Phi_n(-q)^3} \\ \qquad \text{if}\; n+2\ell\equiv1\pmod4, \\[3pt]
\kern-4.5pt \pmod{\Phi_n(q)^3 \Phi_n(-q)^3} \\ \qquad \text{if}\; n+2\ell\equiv3\pmod4.
\end{cases}
\label{eq:mod-ell}
\end{align}
\end{theorem}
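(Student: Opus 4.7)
The proof follows the \emph{creative microscoping} template of \cite{GuoZu}. I would introduce an auxiliary parameter $a$ by deforming the cube $(q^{2-4\ell};q^4)_k^3$ into $(aq^{2-4\ell};q^4)_k\,(q^{2-4\ell}/a;q^4)_k\,(q^{2-4\ell};q^4)_k$ with a matching adjustment $(aq^4;q^4)_k\,(q^4/a;q^4)_k\,(q^4;q^4)_k$ in the denominator, and consider the deformed sum
\begin{equation*}
S_n(a;\ell)=\sum_{k=0}^{(n-1)/2+\ell}\frac{(1+q^{4k-2\ell+1})\,(aq^{2-4\ell};q^4)_k\,(q^{2-4\ell}/a;q^4)_k\,(q^{2-4\ell};q^4)_k}{(1+q^{1-2\ell})\,(aq^4;q^4)_k\,(q^4/a;q^4)_k\,(q^4;q^4)_k}\,q^{(6\ell+1)k}.
\end{equation*}
This is a terminating very-well-poised basic hypergeometric series, to which the terminating $q$-Dixon summation applies. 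At $a=1$ it reduces to the left-hand side of \eqref{eq:mod-ell}, except that the sum in \eqref{eq:mod-ell} runs up to $n-1$ rather than $(n-1)/2+\ell$. The discrepancy is controlled: each tail summand with $k>(n-1)/2+\ell$ carries the factor $(q^{2-4\ell};q^4)_k^3$, whose third power picks up three copies of $\Phi_n(-q)=\Phi_{2n}(q)$ exactly when $k$ crosses the threshold, while $(q^4;q^4)_k$ stays coprime to $\Phi_n(-q)$ throughout. Hence the tail vanishes modulo $\Phi_n(-q)^3$.

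Next I would propose an explicit rational function $T_n(a;\ell)$ of $a$ and $q$ whose value at $a=1$ is the right-hand side of \eqref{eq:mod-ell}, and use the $q$-Dixon summation to verify the identities $S_n(q^{n};\ell)=T_n(q^n;\ell)$ and $S_n(q^{-n};\ell)=T_n(q^{-n};\ell)$ exactly, as equalities of rational functions in $q$. The difference $S_n(a;\ell)-T_n(a;\ell)$, viewed as a polynomial in $a$ with coefficients in $\mathbb{Z}[q,q^{-1}]$ after clearing denominators, is then divisible by the two coprime linear factors $1-aq^n$ and $a-q^n$, and therefore by their product. Setting $a=1$, both factors become $1-q^n$, each contributing one $\Phi_n(q)$; this accounts for the modulus $\Phi_n(q)^2$. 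The companion modulus $\Phi_n(-q)^3$ I would extract from $S_n(1;\ell)-T_n(1;\ell)$ by working modulo $\Phi_n(-q)$ (on which $q^n\equiv-1$): the head sum collapses to a closed form which, after a second application of the $q$-Dixon evaluation and simplification of the ensuing $q$-Pochhammers, matches $T_n(1;\ell)$ modulo $\Phi_n(-q)^3$. The two moduli then combine via $\gcd(\Phi_n(q),\Phi_n(-q))=1$ for odd $n>1$, yielding \eqref{eq:mod-ell} in the case $n+2\ell\equiv1\pmod4$.

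The additional $\Phi_n(q)$ needed when $n+2\ell\equiv3\pmod4$ is where I expect the main obstacle to lie. Under this parity, I anticipate that the Pochhammer ratio
\begin{equation*}
\frac{(q^{3-6\ell};q^4)_{(n-1)/2+\ell}}{(q^{5-2\ell};q^4)_{(n-1)/2+\ell}}
\end{equation*}
in $T_n(1;\ell)$ acquires an extra zero of $\Phi_n(q)$, because the numerator progression $3-6\ell+4j$ then passes through an additional multiple of $n$ inside the summation range while the denominator progression $5-2\ell+4j$ does not; in the opposite parity, the two progressions cancel this contribution. Matching this surplus zero on the left requires producing an extra $\Phi_n(q)$ out of $S_n(1;\ell)$, which I plan to achieve by pairing indices $k\leftrightarrow(n-1)/2+\ell-k$: in the targeted parity regime the paired summands should combine, modulo $\Phi_n(q)$, into an expression whose leading behaviour cancels via the very-well-poised factor $(1+q^{4k-2\ell+1})$. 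Carrying out this parity-dependent pairing inside the microscoping framework, and merging it cleanly with the $\Phi_n(-q)^3$ argument, is the delicate part of the proof.
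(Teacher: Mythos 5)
Your overall framework (deform with a parameter $a$, evaluate via the $q$-Dixon sum, recover cyclotomic factors at $a=1$, plus a pairing $k\leftrightarrow (n-1)/2+\ell-k$) is the right one, and your treatment of the tail $k>(n-1)/2+\ell$ is fine. But there are two concrete defects that break the argument as written. First, the evaluation points must be $a=q^{\pm 2n}$, not $a=q^{\pm n}$. The series lives in base $q^4$: the factor $(aq^{2-4\ell};q^4)_k$ terminates at $k=(n-1)/2+\ell$ precisely when $aq^{2-4\ell}=q^{-4((n-1)/2+\ell)}$, i.e.\ $a=q^{-2n}$; at $a=q^{\pm n}$ the exponent $2-4\ell\mp n$ is odd, the series does not terminate, and the $q$-Dixon evaluation gives no rational identity there. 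This is not a cosmetic change: with the correct points the difference is divisible by $(1-aq^{2n})(a-q^{2n})$, which at $a=1$ becomes $(1-q^{2n})^2$ and hence already contributes $\Phi_n(q)^2\Phi_n(-q)^2$ (recall $\Phi_n(q)\Phi_n(-q)=\Phi_n(q^2)\mid 1-q^{2n}$ for odd $n$). Two of the three copies of $\Phi_n(-q)$ come from exactly this step, which your version loses entirely.

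Second, your route to $\Phi_n(-q)^3$ has no mechanism behind it: reducing $S_n(1;\ell)-T_n(1;\ell)$ modulo $\Phi_n(-q)$ and ``applying $q$-Dixon again'' can only certify divisibility by the first power of $\Phi_n(-q)$, not the cube. Worse, even granting separately that the difference at $a=1$ is divisible by $\Phi_n(-q)^2$ (from the microscoping) and by $\Phi_n(-q)$ (from a pairing at $a=1$), these two facts combine only to $\Phi_n(-q)^2$. The missing idea is to run the pairing argument on the \emph{parametric} sum: using the reduction of $(aq^{1-2\ell};q^2)_{(n-1)/2+\ell-k}/(q^2/a;q^2)_{(n-1)/2+\ell-k}$ modulo $\Phi_n(q)$ (and its mirror modulo $\Phi_n(-q)$), one shows that the paired summands of $S_n(a;\ell)$ cancel modulo $\Phi_n(-q)$ when $n+2\ell\equiv1\pmod4$ and modulo $\Phi_n(q^2)$ when $n+2\ell\equiv3\pmod4$, \emph{as rational functions of $a$}. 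Since $1-aq^{2n}$, $a-q^{2n}$ and $\Phi_n(-q)$ (resp.\ $\Phi_n(q^2)$) are pairwise coprime, the three divisibilities combine before the limit, and letting $a\to1$ (checking that the denominators $(aq^4;q^4)_k(q^4/a;q^4)_k$ stay coprime to $\Phi_n(q^2)$ for $k\leqslant(n-1)/2+\ell$) yields the moduli $\Phi_n(q)^2\Phi_n(-q)^3$ and $\Phi_n(q)^3\Phi_n(-q)^3$ in the two parity cases. Your guess that the extra $\Phi_n(q)$ in the case $n+2\ell\equiv3\pmod4$ comes from the right-hand side acquiring a zero is also off: both sides must (and do) vanish modulo $\Phi_n(q^2)$ in that case, and the extra power on the left again comes from the pairing, not from bookkeeping of the Pochhammer ratio.
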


Note that the $q$-congruence \eqref{eq:mod-ell} remains true when the sum is over $k$ from 0 to $(n-1)/2+\ell$,
since $(q^{2-4\ell};q^4)_k/(q^4;q^4)_k\equiv 0\pmod{\Phi_n(q^2)}$ for $(n-1)/2+\ell<k\leqslant n-1$.
Furthermore, when $\ell=0$ and $\ell=1$ (hence $n\ge3$) the theorem reduces to Theorems \ref{thm:main-1} and \ref{thm:main-2}, respectively.

The following easily proved $q$-congruence (see \cite[Lemma 3.1]{GS}) is necessary in our derivation of Theorem~\ref{thm:main-3}.

\begin{lemma}\label{lem:2.1}
Let $n$ be a positive odd integer.
Then, for $0\leqslant k\leqslant (n-1)/2$, we have
\begin{equation*}
\frac{(aq;q^2)_{(n-1)/2-k}}{(q^2/a;q^2)_{(n-1)/2-k}}
\equiv (-a)^{(n-1)/2-2k}\frac{(aq;q^2)_k}{(q^2/a;q^2)_k}\,q^{(n-1)^2/4+k}
\pmod{\Phi_n(q)}.
\end{equation*}
\end{lemma}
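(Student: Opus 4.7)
The plan is to apply the classical reversal identity for $q$-shifted factorials,
$$(a;q^2)_N = (-a)^N q^{N(N-1)}(q^{2-2N}/a;q^2)_N,$$
to both $(aq;q^2)_{m-k}$ and $(q^2/a;q^2)_{m-k}$, where $m=(n-1)/2$, and then to reduce the resulting negative powers of $q$ via $q^n\equiv 1\pmod{\Phi_n(q)}$. Apart from that, the argument is arithmetic in the exponents.

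Taking $N=m-k$ and shifting $a\mapsto aq$ in the reversal identity gives
$$(aq;q^2)_{m-k} = (-a)^{m-k} q^{(m-k)^2}\,(q^{1-2(m-k)}/a;q^2)_{m-k}.$$
Since $1-2(m-k)=2+2k-n$, the factor $(q^{1-2(m-k)}/a;q^2)_{m-k}$ is congruent modulo $\Phi_n(q)$ to $(q^{2+2k}/a;q^2)_{m-k}=(q^2/a;q^2)_m/(q^2/a;q^2)_k$. Applying the same procedure with $a\mapsto q^2/a$ (and reducing the resulting negative powers of $q$ in the same manner) yields
$$(q^2/a;q^2)_{m-k} \equiv (-1/a)^{m-k} q^{(m-k)(m-k+1)}\,\frac{(aq;q^2)_m}{(aq;q^2)_k}\pmod{\Phi_n(q)}.$$

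Dividing the two congruences, the powers of $a$ combine to $a^{2(m-k)}$, the $q$-exponents collapse to $-(m-k)$, and what remains is
$$\frac{(aq;q^2)_{m-k}}{(q^2/a;q^2)_{m-k}} \equiv a^{2(m-k)}\,q^{-(m-k)}\,\frac{(q^2/a;q^2)_m}{(aq;q^2)_m}\,\frac{(aq;q^2)_k}{(q^2/a;q^2)_k}\pmod{\Phi_n(q)}.$$
Specialising the first reduction at $k=0$ produces $(aq;q^2)_m\equiv(-a)^m q^{m^2}(q^2/a;q^2)_m$, hence $(q^2/a;q^2)_m/(aq;q^2)_m\equiv(-a)^{-m}q^{-m^2}$. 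Substituting this and using the congruence $q^{mn}=q^{m(2m+1)}\equiv 1\pmod{\Phi_n(q)}$ to rewrite $q^{-m^2-m+k}$ as $q^{m^2+k}$, together with $(-1)^m a^{m-2k}=(-a)^{m-2k}$ and $m^2=(n-1)^2/4$, gives the claimed congruence.

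The whole argument is essentially bookkeeping of signs and $q$-exponents. The only potentially confusing point is that the raw exponent of $q$ one obtains is $-m^2-m+k$ rather than $m^2+k$, and their agreement modulo $\Phi_n(q)$ depends on the identity $q^{mn}\equiv 1$, which is where the oddness of $n$ (ensuring that $m$ is an integer with $2m+1=n$) is used. No deeper tools are needed, consistent with the paper's description of the lemma as \emph{easily proved}.
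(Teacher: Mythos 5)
Your proof is correct: the reversal identity, the reductions via $q^{-n}\equiv 1\pmod{\Phi_n(q)}$, and the exponent bookkeeping (including the final adjustment $q^{-m^2-m+k}=q^{m^2+k}q^{-mn}\equiv q^{m^2+k}$ with $m=(n-1)/2$) all check out. The paper itself gives no proof, deferring to \cite[Lemma~3.1]{GS} as ``easily proved,'' and your argument is essentially the standard one used there --- reversing the order of the $q$-shifted factorials and invoking $q^n\equiv 1$ modulo $\Phi_n(q)$.
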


Like the proofs given in \cite{GuoZu}, we start with the following generalization of \eqref{eq:mod-phi-cases} with an extra parameter~$a$.

\begin{theorem}
Let $n>1$ be an odd integer and $0\leqslant\ell\leqslant (n-1)/2$. Then
\begin{align}
&\sum_{k=0}^{n-1}\frac{(1+q^{4k-2\ell+1})\,(aq^{2-4\ell};q^4)_k (q^{2-4\ell}/a;q^4)_k (q^{2-4\ell};q^4)_k}
{(1+q^{1-2\ell })\,(aq^4;q^4)_k (q^4/a;q^4)_k (q^4;q^4)_k}\,q^{(6\ell+1)k} \notag \\
&\equiv
\frac{(1-q^{2n})\,(q^{3-6\ell};q^4)_{(n-1)/2+\ell}}
{(1-q^{2-4\ell})\,(q^{5-2\ell};q^4)_{(n-1)/2+\ell}}\,q^{(2\ell-1)((n-1)/2+\ell)}
\begin{cases}
\kern-4.5pt \pmod{\Phi_n(-q) (1-aq^{2n})(a-q^{2n})} \\ \qquad\text{if}\; n+2\ell\equiv1\pmod4, \\[3pt]
\kern-4.5pt \pmod{\Phi_n(q^2) (1-aq^{2n})(a-q^{2n})} \\ \qquad\text{if}\; n+2\ell\equiv3\pmod4.
\end{cases}
\label{eq:mod-ell-a}
\end{align}
\end{theorem}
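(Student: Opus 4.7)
The plan is to follow the creative microscoping framework of \cite{GuoZu}. I view both sides of \eqref{eq:mod-ell-a} as rational functions of the parameter $a$ with coefficients in $\mathbb{Z}[q,q^{-1}]$. The three polynomials $1-aq^{2n}$, $a-q^{2n}$ and $\Phi_n(-q)$ (respectively $\Phi_n(q^2)$) are pairwise coprime in $\mathbb{Z}[a,q]$, and the denominators $(aq^4;q^4)_k(q^4/a;q^4)_k(q^4;q^4)_k$ appearing in the LHS are coprime to each of these factors for $0\leqslant k\leqslant n-1$. Hence, by a Chinese remainder argument, it suffices to establish the congruence modulo each factor separately.

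First I would treat the moduli $1-aq^{2n}$ and $a-q^{2n}$. Setting $a=q^{-2n}$, the factor $(aq^{2-4\ell};q^4)_k$ becomes $(q^{2-2n-4\ell};q^4)_k$, which vanishes at $k=(n+2\ell+1)/2$; hence the sum truncates to $k=0,\dots,(n-1)/2+\ell$. The truncated sum is of very-well-poised $_6\phi_5$ (equivalently, terminating $_8\phi_7$) shape in base $q^4$, with well-poising parameter $A=-q^{1-2\ell}$ accounting for the prefactor $(1+q^{4k-2\ell+1})/(1+q^{1-2\ell})$. Jackson's terminating $q$-Dixon summation evaluates it in closed form, and a standard manipulation of $q$-Pochhammer symbols matches the result with the $a$-independent RHS of \eqref{eq:mod-ell-a}. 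The symmetric specialisation $a=q^{2n}$ follows from the $a\leftrightarrow 1/a$ symmetry of the summand, which interchanges $(aq^{2-4\ell};q^4)_k$ and $(q^{2-4\ell}/a;q^4)_k$ and leaves both the prefactor and the RHS invariant.

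Next I would handle the remaining cyclotomic factor. Let $\zeta$ be a primitive root of $\Phi_n(-q)$, so that $\zeta^n=-1$ in an appropriate extension. A direct application of Lemma~\ref{lem:2.1} (with a carefully chosen specialisation of its parameter to convert $(q^{2-4\ell};q^4)_k/(q^4;q^4)_k$-type ratios) shows that the substitution $k\mapsto (n-1)/2+\ell-k$ acts as an involution on the summand modulo $\Phi_n(-q)$ in such a way that paired terms combine into the closed form on the RHS. The parity condition $n+2\ell\pmod 4$ controls a sign in this pairing: when $n+2\ell\equiv 3\pmod 4$ the prefactor $1+q^{4k-2\ell+1}$ additionally picks up a vanishing at primitive $n$-th roots of $+1$, which upgrades the modulus from $\Phi_n(-q)$ to $\Phi_n(q^2)=\Phi_n(q)\Phi_n(-q)$. (As a sanity check, setting $a=1$ in the parametrised statement combines $(1-aq^{2n})(a-q^{2n})\big|_{a=1}=-(1-q^{2n})^2$, which is divisible by $\Phi_n(q)^2\Phi_n(-q)^2$, with the cyclotomic factor above, yielding precisely the moduli $\Phi_n(q)^2\Phi_n(-q)^3$ and $\Phi_n(q)^3\Phi_n(-q)^3$ of Theorem~\ref{thm:main-3}.)

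The main obstacle will be the bookkeeping in the third step. One must pick the specialisation in Lemma~\ref{lem:2.1} so that the involution does not merely annihilate the LHS but actually reconstructs the closed-form product on the RHS modulo the relevant cyclotomic factor; and one has to verify that the parity analysis distinguishes $\Phi_n(-q)$ from $\Phi_n(q^2)$ in exactly the stated way. The closed form on the RHS being intricate, it is also nontrivial to check that the $q$-Dixon evaluation in step two gives precisely this expression rather than a cousin of it; this is a direct computation, but matching exponents and shifted-factorial arguments requires care.
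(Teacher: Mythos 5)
Your skeleton coincides with the paper's: split the modulus into the three pairwise coprime factors; at $a=q^{\mp2n}$ the sum terminates at $k=(n-1)/2+\ell$ and is evaluated in closed form by a $q$-Dixon summation (the paper substitutes $b=q^{-2n}$, $c=q^{2n}$ into the nonterminating $q$-Dixon sum \cite[Appendix (II.13)]{GR}, which is your terminating evaluation in different clothing, and your $a\leftrightarrow1/a$ symmetry remark is fine); the cyclotomic factor is handled by pairing $k$ with $(n-1)/2+\ell-k$ via Lemma~\ref{lem:2.1}. Steps one and two are sound. Step three, however, both misses the key simplification and rests on a false mechanism.

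The simplification: nothing needs to be ``reconstructed''. Both sides of \eqref{eq:mod-ell-a} are congruent to $0$ modulo the relevant cyclotomic factor --- the right-hand side because it carries the factor $1-q^{2n}$, divisible by $\Phi_n(q^2)=\Phi_n(q)\Phi_n(-q)$, while its denominator is coprime to the factor in question; the left-hand side because the paired summands cancel. So the cyclotomic step is ``$0\equiv0$''. The error: you claim that when $n+2\ell\equiv3\pmod4$ the prefactor $1+q^{4k-2\ell+1}$ ``picks up a vanishing at primitive $n$-th roots of $+1$''. It cannot: for $n$ odd a primitive $n$-th root of unity has odd order, so no integer power of it equals $-1$, and $1+q^{4k-2\ell+1}$ is a unit modulo $\Phi_n(q)$ for every $k$. (That factor does vanish modulo $\Phi_n(-q)$ at the central index $k=\tfrac12\bigl((n-1)/2+\ell\bigr)$ when $(n-1)/2+\ell$ is even, which is how the unpaired middle term is disposed of.) The actual source of the parity dichotomy is the sign $(-a)^{(n-1)/2+\ell-2k}$ in the $\ell$-shifted form of Lemma~\ref{lem:2.1}: the reflected summand is $(-1)^{(n-1)/2+\ell}$ times a unit multiple of the original modulo $\Phi_n(q)$, so the pairs cancel modulo $\Phi_n(q)$ exactly when $(n-1)/2+\ell$ is odd, i.e.\ when $n+2\ell\equiv3\pmod4$, whereas cancellation modulo $\Phi_n(-q)$ holds for both parities. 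As written, your argument establishes the congruence only modulo $\Phi_n(-q)(1-aq^{2n})(a-q^{2n})$ in the case $n+2\ell\equiv3\pmod4$.
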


\begin{proof}
Performing the parameter substitutions $q\mapsto q^4$, $a\mapsto q^{2-4\ell}$, $b\mapsto bq^{2-4\ell}$ and $c\mapsto cq^{2-4\ell}$ in the $q$-Dixon sum \cite[Appendix (II.13)]{GR}, we obtain
\begin{align}
&
\sum_{k=0}^\infty \frac{(1+q^{4k-2\ell+1})\,(q^{2-4\ell};q^4)_k (bq^{2-4\ell};q^4)_k (cq^{2-4\ell};q^4)_k}
{(1+q^{1-2\ell})\,(q^4/b;q^4)_k (q^4/c;q^4)_k (q^4;q^4)_k} \biggl(\frac{q^{6\ell+1}}{bc}\biggr)^k
\nonumber\\ &\qquad
=\frac{(q^{6-4\ell};q^4)_\infty (q^{2\ell+3}/b;q^4)_\infty (q^{2\ell+3}/c;q^4)_\infty (q^{4\ell+2}/bc;q^4)_\infty}
{(q^4/b;q^4)_\infty (q^4/c;q^4)_\infty (q^{5-2\ell};q^4)_\infty (q^{6\ell+1}/bc;q^4)_\infty}.
\label{eq:qDixon-3}
\end{align}
Since $n$ is odd, putting $b=q^{-2n}$ and $c=q^{2n}$ in \eqref{eq:qDixon-3} we see that the left-hand side terminates and is equal to
\begin{align*}
&
\sum_{k=0}^{(n-1)/2+\ell}\frac{(1+q^{4k-2\ell+1})\,(aq^{2-4\ell};q^4)_k (q^{2-4\ell}/a;q^4)_k (q^{2-4\ell};q^4)_k}
{(1+q^{1-2\ell })\,(aq^4;q^4)_k (q^4/a;q^4)_k (q^4;q^4)_k}\,q^{(6\ell+1)k}
\\ &\quad
=\sum_{k=0}^{n-1}\frac{(1+q^{4k-2\ell+1})\,(aq^{2-4\ell};q^4)_k (q^{2-4\ell}/a;q^4)_k (q^{2-4\ell};q^4)_k}
{(1+q^{1-2\ell })\,(aq^4;q^4)_k (q^4/a;q^4)_k (q^4;q^4)_k}\,q^{(6\ell+1)k},
\end{align*}
while the right-hand side becomes
\begin{align*}
&
\frac{(q^{2\ell-2n+3};q^4)_{(n-1)/2+\ell} (q^{6-4\ell};q^4)_{(n-1)/2+\ell}}
{(q^{4-2n};q^4)_{(n-1)/2+\ell} (q^{5-2\ell};q^4)_{(n-1)/2+\ell}  }
\\ &\quad
=\frac{(1-q^{2n})\,(q^{3-6\ell};q^4)_{(n-1)/2+\ell}}
{(1-q^{2-4\ell})\,(q^{5-2\ell};q^4)_{(n-1)/2+\ell}}\,q^{(2\ell-1)((n-1)/2+\ell)}.
\end{align*}
This proves that the $q$-congruence \eqref{eq:mod-ell-a} holds modulo $1-aq^{2n}$ or $a-q^{2n}$.

On the other hand, by Lemma \ref{lem:2.1}, for $0\leqslant k\leqslant (n-1)/2+\ell$,
modulo $\Phi_n(q)$ we have
\begin{align*}
\frac{(aq^{1-2\ell};q^2)_{(n-1)/2+\ell-k}}{(q^2/a;q^2)_{(n-1)/2+\ell-k}}
&=\frac{(aq^{1-2\ell};q^2)_{\ell}(aq;q^2)_{(n-1)/2-k}}
{(q^{n+1-2k}/a;q^2)_{\ell}(q^2/a;q^2)_{(n-1)/2-k}}
\\
&\equiv (-a)^{(n-1)/2-2k}\frac{(aq^{1-2\ell};q^2)_{\ell}(aq;q^2)_k}
{(q^{n+1-2k}/a;q^2)_{\ell}(q^2/a;q^2)_k}\,q^{(n-1)^2/4+k}
\\
&=(-a)^{(n-1)/2-2k}\frac{(aq^{1-2\ell};q^2)_k (aq^{2k-2\ell+1};q^2)_\ell }{(q^{n+1-2k}/a;q^2)_{\ell}(q^2/a;q^2)_k}\,q^{(n-1)^2/4+k}
\\
&\equiv(-a)^{(n-1)/2+\ell-2k}\frac{(aq^{1-2\ell};q^2)_k }{(q^2/a;q^2)_k}\,q^{(n-1)^2/4+k+(2k-\ell)\ell},
\end{align*}
where we used $q^n\equiv 1\pmod{\Phi_n(q)}$ in the last step.
Using the above $q$-congruence we can easily check that, for odd $n>1$ and $0\leqslant k\leqslant (n-1)/2+\ell$,
sum of the $k$-th  and $((n-1)/2+\ell-k)$-th summands on the left-hand side of \eqref{eq:mod-ell-a} is congruent to $0$ modulo $\Phi_n(-q)$  (or modulo $\Phi_n(q^2)$ if $n\equiv 3-2\ell\pmod{4}$). It follows that
\begin{align*}
&\sum_{k=0}^{(n-1)/2+\ell}
\frac{(1+q^{4k-2\ell+1})\,(aq^{2-4\ell};q^4)_k (q^{2-4\ell}/a;q^4)_k (q^{2-4\ell};q^4)_k}
{(1+q^{1-2\ell })\,(aq^4;q^4)_k (q^4/a;q^4)_k (q^4;q^4)_k}\,q^{(6\ell+1)k}
\\
&\quad\equiv 0
\begin{cases}
\kern-4.5pt \pmod{\Phi_n(-q)} &\text{if $n+2\ell\equiv 1\pmod{4}$,} \\[4.5pt]
\kern-4.5pt \pmod{\Phi_n(q^2)} &\text{if $n+2\ell \equiv 3\pmod{4}$.}
\end{cases}
\end{align*}
It is easy to see that the right-hand side of \eqref{eq:mod-ell} is congruent to $0$ modulo  $\Phi_n(-q)$ if $n+2\ell\equiv 1\pmod{4}$
and modulo $\Phi_n(q^2)$ if $n+2\ell\equiv 3\pmod{4}$.
Therefore, the $q$-congruence \eqref{eq:mod-ell-a} holds modulo $\Phi_n(-q)$ if $n+2\ell\equiv 1\pmod{4}$
and modulo $\Phi_n(q^2)$ if $n+2\ell\equiv 3\pmod{4}$.
Since the polynomials $1-aq^{2n}$, $a-q^{2n}$ and $\Phi_n(-q)$ (or $\Phi_n(q^2)$) are pairwise coprime, we complete the proof of~\eqref{eq:mod-ell-a}.
\end{proof}

\begin{proof}[Proof of Theorem \textup{\ref{thm:main-3}}]
We assume that $n>1$, since the $n=1$ case (making $\ell=0$ only possible) is trivial.
The limits of the denominators on both sides of \eqref{eq:mod-ell-a} as $a\to 1$
are relatively prime to $\Phi_n(q^2)$, since $k$ is in the range $0\leqslant k\leqslant (n-1)/2+\ell$.
On the other hand, the limit of $(1-aq^{2n})(a-q^{2n})$ as $a\to1$ contains the factor $\Phi_n(q^2)^2$.
\end{proof}

\section{Discussion}
\label{sec3}

The method of creative microscoping used in our proofs indicates the origin of $q$-congruences from \emph{infinite} $q$-hypergeometric identities; for example, the $q$-congruence \eqref{eq:mod-phi-cases} corresponds to the identity
\begin{equation}
\sum_{k=0}^\infty\frac{(1+q^{4k+1})\,(q^2;q^4)_k^3}{(1+q)\,(q^4;q^4)_k^3}\,q^{k}
=\frac{(q^2;q^4)_\infty^2(q^3;q^4)_\infty^2}{(1+q)\,(q;q^4)_\infty^2(q^4;q^4)_\infty^2},
\label{eq:origin}
\end{equation}
which is just a particular instance of \eqref{eq:qDixon-3}. Note that the limiting cases as $q\to-1$ and $q\to1$ of \eqref{eq:origin} give the formulas \eqref{eq:ram} and
\begin{equation}
\sum_{k=0}^\infty\frac{(\frac12)_k^3}{k!^3}
=\frac{\Gamma(1/4)^4}{4\pi^3}
=\frac{8L(f,1)}{\pi}
=\frac{16L(f,2)}{\pi^2}
\label{eq01}
\end{equation}
where
\begin{equation*}
f(\tau)=q\prod_{j=1}^\infty(1-q^{4j})^6=\sum_{n=1}^\infty a(n)q^n,
\quad\text{with}\; q=\exp(2\pi i\tau),
\end{equation*}
is the CM modular form from the introduction and $L(f,s)$ denotes its $L$-function.
This means that the $q$-identity \eqref{eq:origin} presents a common $q$-extension of evaluations \eqref{eq:ram} and~\eqref{eq01}\,---\,the fact that makes it less surprising that the $q$-congruence \eqref{eq:mod-phi-cases} simultaneously extends \eqref{eq:pram} and \eqref{eq:h2}.

The intermediate use of \emph{parametric} $q$-hypergeometric identities in our proof of Theorem~\ref{thm:main-3} based on the $q$-Dixon sum suggests that different $q$-congruences underlying \eqref{eq:origin} are possible. This is indeed the case when we analyze the formula~\eqref{eq:origin} as the $a=1$ specialization of
\begin{align}
&
\sum_{k=0}^\infty\frac{(1+q^{4k+1})\,(aq;q^2)_k(q/a;q^2)_k(-q;q^2)_k^2(q^2;q^4)_k}
{(1+q)\,(q^2;q^2)_k^2(-aq^2;q^2)_k(-q^2/a;q^2)_k(q^4;q^4)_k}\,q^k
\nonumber\\ &\quad
=\frac{(-q;q^2)_\infty^2(aq^3;q^4)_\infty^2(q^3/a;q^4)_\infty^2}
{(1+q)\,(-aq^2;q^2)_\infty(-q^2/a;q^2)_\infty(q^2;q^2)_\infty^2}
\label{eq03}
\end{align}
which originates from a $q$-analogue of Watson's $_3F_2$ sum \cite[Appendix (II.16)]{GR}.
When we choose $a=q^n$ (or $a=q^{-n}$) in \eqref{eq03}, for $n>1$ odd, we get the sum terminating after $(n-1)/2$ terms on the left-hand side of \eqref{eq03}, while the right-hand side vanishes if $n$ is of the form $4m+3$ and it becomes equal to
\begin{align*}
\frac{(-q;q^2)_\infty^2(q^{4m+4};q^4)_\infty^2(q^{2-4m};q^4)_\infty^2}
{(1+q)\,(-q^{4m+3};q^2)_\infty(-q^{1-4m};q^2)_\infty(q^2,q^4;q^4)_\infty^2}
=[4m+1]\,\frac{(q^2;q^4)_m^2}{(q^4;q^4)_m^2}
\end{align*}
if $n=4m+1$. This means that modulo $(a-q^n)(1-aq^n)$ we have
\begin{align*}
&
\sum_{k=0}^N\frac{(1+q^{4k+1})\,(aq;q^2)_k(q/a;q^2)_k(-q;q^2)_k^2(q^2;q^4)_k}
{(1+q)\,(q^2;q^2)_k^2(-aq^2;q^2)_k(-q^2/a;q^2)_k(q^4;q^4)_k}\,q^k
\nonumber\\ &\quad
\equiv\begin{cases}
[4m+1]\,\dfrac{(q^2;q^4)_m^2}{(q^4;q^4)_m^2} &\text{if}\; n=4m+1, \\[2.5pt]
0 &\text{if}\; n\equiv3\pmod4,
\end{cases}
%\label{eq04}
\end{align*}
for any $N\ge(n-1)/2$. The limiting $a\to1$ case of the congruences can be shown to be
\begin{align}
\sum_{k=0}^{(n-1)/2}\frac{(1+q^{4k+1})\,(q^2;q^4)_k^3}{(1+q)\,(q^4;q^4)_k^3}\,q^k
\equiv\begin{cases}
[4m+1]\,\dfrac{(q^2;q^4)_m^2}{(q^4;q^4)_m^2} &\text{if}\; n=4m+1, \\[2.5pt]
0 &\text{if}\; n\equiv3\pmod4,
\end{cases}
\label{eq05}
\end{align}
modulo $\Phi_n(q)^2\Phi_n(-q)$. This is quite similar in spirit to \eqref{eq:mod-phi}, though still far from constructing $q$-analogues for the coefficients $a(p)$ in~\eqref{eq:a(p)} of the modular form $f(\tau)$.
The latter means that a hunt for $q$-rational functions, which equal the left-hand side of \eqref{eq:mod-phi} or \eqref{eq05} modulo $\Phi_n(q)^2$ and specialize to $a(n)$ as $q\to1$ (at least for $n$~prime), is still on its way.
Such $q$-rational functions are also expected to be self-reciprocal, that is, invariant under the involution $q\mapsto1/q$,
as all the left- and right-hand sides in \eqref{eq:mod-phi}, \eqref{eq:mod-phi-cases}, \eqref{eq05} and also \eqref{eq:mod-ell} are.

\end{document}